\newcommand{\dotminus}{\mathop{\mbox{$-^{\hspace{-.5em}\cdot}\,\,$}}}
\newcommand{\lep}{<\negmedspace\varepsilon_0}
\newcommand{\N}{\mathsf{N}}
\newcommand{\s}{\sigma}
\newcommand{\T}{\mathcal{T}}
\newcommand{\ta}{\tau}
\newtheorem{lemma}{Lemma}[section]
\newtheorem{theorem}[lemma]{Theorem}
\newtheorem*{statman}{Statman's Type-Reducibility Theorem}
\numberwithin{equation}{section}
\begin{document}

\title[On the definability of functionals in G\"odel's theory $\T$]{On the definability of functionals\\
in G\"odel's theory $\T$}

\author{Matthew~P. Szudzik}

\date{10 October 2014}

\begin{abstract}
G\"odel's theory $\T$ can be understood as a theory of the simply-typed lambda calculus that is extended to include the constant $0_\N$, the successor function $\mathrm{S}_+$, and the operator $\mathrm{R}_\ta$ for primitive recursion on objects of type $\ta$.  It is known that the functions from non-negative integers to non-negative integers that can be defined in this theory are exactly the $\lep$-recursive functions of non-negative integers.  As an extension of this result, we show that when the domain and codomain are restricted to pure closed normal forms, the \emph{functionals} of arbitrary type that are definable in $\T$ can be encoded as $\lep$-recursive functions.
\end{abstract}
\maketitle

\section{Introduction}

For the formalization of his \textit{Dialectica} interpretation of intuitionistic arithmetic, G\"odel~\cite{kG58} introduced the theory $\T$.  It was later shown~\cite{aD68,sH68,wH70} that $\T$ can be formalized as an extension of the simply-typed lambda calculus.\footnote{Readers unfamiliar with the simply-typed lambda calculus should consult reference~\cite{hB81}.}
In this formalization, the terms of the theory $\T$ are simply-typed lambda terms with ground type $\N$, extended to include the constants $0_\N^\N$, $\mathrm{S}_+^{\N\to\N}$, and $\mathrm{R}_\ta^{\ta\to(\ta\to\N\to\ta)\to\N\to\ta}$ for each type $\ta$.  We use superscripts to denote the types of terms, freely omitting the superscript when the type can be deduced from the term's context or when the type is unimportant.  The formulas of $\T$ are equations between terms, with formulas of the following forms taken as axioms for each type $\ta$
\begin{align}
\mathrm{R}_\ta\,A\,B\,0_\N&=A \notag\\
\mathrm{R}_\ta\,A\,B\,(\mathrm{S}_+\,C)&=B\,(\mathrm{R}_\ta\,A\,B\,C)\,C \notag
\end{align}
where $A$, $B$, and $C$ are metavariables for terms of types $\ta$, $\ta\to\N\to\ta$, and $\N$, respectively.  The rules of inference of $\T$ are the rules of $\beta\eta$-conversion and the rules of substitution of equality.\footnote{An alternative formalization of $\T$ extends the simply-typed lambda calculus to include the constants $0_\N^\N$, $\mathrm{S}_+^{\N\to\N}$, and $\mathrm{Iter}_\ta^{\ta\to(\ta\to\ta)\to\N\to\ta}$ for each type $\ta$, and takes the formulas of the following forms as axioms.
\begin{align}
\mathrm{Iter}_\ta\,A^\ta\,B^{\ta\to\ta}\,0_\N&=A \notag\\
\mathrm{Iter}_\ta\,A^\ta\,B^{\ta\to\ta}\,(\mathrm{S}_+\,C^\N)&=B\,(\mathrm{Iter}_\ta\,A\,B\,C) \notag
\end{align}
The rules of inference for this alternative formalization are the rules of $\beta\eta$-conversion and the rules of substitution of equality.  Both formalizations are equivalent because $\mathrm{Iter}_\ta$ can be expressed in terms of $\mathrm{R}_\ta$ as $\lambda a^\ta b^{\ta\to\ta}.\,\mathrm{R}_\ta\,a\,(\lambda x^\ta y^\N.\,b\,x)$, and because $\mathrm{R}_\ta$ can be expressed in terms of $\mathrm{Iter}_{\ta\times\N}$ as
\begin{align}
\lambda a^\ta b^{\ta\to\N\to\ta}c^\N.\,\mathrm{D}_{1,\ta,\N}\,\bigl(\mathrm{Iter}_{\ta\times\N}\,(\mathrm{D}_{0,\ta,\N}\,a\,0_\N)\,(\mathrm{H}_\ta\,b)\,c\bigr) \notag
\end{align}
where
\begin{align}
\mathrm{H}_\ta^{(\ta\to\N\to\ta)\to\ta\times\N\to\ta\times\N}=\lambda x^{\ta\to\N\to\ta}y^{\ta\times\N}.\,\mathrm{D}_{0,\ta,\N}\,\bigl(x\,(\mathrm{D}_{1,\ta,\N}\,y)\,(\mathrm{D}_{2,\ta,\N}\,y)\bigr)\,\bigl(\mathrm{S}_+\,(\mathrm{D}_{2,\ta,\N}\,y)\bigr) \notag
\end{align}
That is, higher-type primitive recursion (as characterized by $\mathrm{R}_\ta$) is equivalent to higher-type iteration (as characterized by $\mathrm{Iter}_\ta$) in the context of the simply-typed lambda calculus with $\beta\eta$-conversion.  See Section~\ref{s:seq} for a discussion of the terms $\mathrm{D}_{0,\ta,\N}$, $\mathrm{D}_{1,\ta,\N}$, and $\mathrm{D}_{2,\ta,\N}$.}  For any terms $A$ and $B$ in the language of $\T$, we write $\T\vdash A=B$ to denote that the equation $A=B$ is provable in $\T$.  We say that a term is a $\beta\eta\T$-normal form if and only if that term is a $\beta\eta$-normal form which, for each type $\ta$, contains no subterms of the form $\mathrm{R}_\ta\,A\,B\,0_\N$ or $\mathrm{R}_\ta\,A\,B\,(\mathrm{S}_+\,C)$.  Since $\T$ has the Church-Rosser property and is strongly normalizing~\cite{hB81,BDS11}, $\T\vdash A=B$ if and only if $A$ and $B$ have the same $\beta\eta\T$-normal forms.\footnote{G\"odel did not clearly define equality between higher-type terms in $\T$.  He only required that equality ``be understood as intensional or definitional equality''~\cite{kG90}.  Most formalizations of $\T$ in the simply-typed lambda calculus take equality to mean $\beta\T$-equality, omitting $\eta$-conversion as a rule of inference.  But we require $\beta\eta\T$-equality for Curry's pairing function in Section~\ref{s:seq} and for Statman's Type-Reducibility Theorem in Section~\ref{s:statman}.  This formalization of $\T$ is not uncommon (see references~\cite{hB74,hB81}, for example).  A survey of several other commonly-used formalizations of equality in $\T$ is contained in reference~\cite{AF98}.}

The closed terms of type $\N$ in the language of $\T$ are called \emph{numerals}.  Each numeral has a $\beta\eta\T$-normal form
\begin{align}
\overbrace{\mathrm{S}_+\,(\mathrm{S}_+\,(\cdots\,(\mathrm{S}_+}^n 0_\N)\cdots)) \notag
\end{align}
where the successor function $\mathrm{S}_+$ is applied $n$ many times to $0_\N$, and we abbreviate any such term as $\overline{n}$.

A function $f$ from closed $\beta\eta\T$-normal forms of types $\s_1$, $\s_2$,~$\ldots$~,~$\s_n$ to closed $\beta\eta\T$-normal forms of type $\ta$ is said to be \emph{defined} by a closed term $F^{\s_1\to\s_2\to\cdots\to\s_n\to\ta}$ in the language of $\T$ if and only if
\begin{align}
\T\vdash F\,A_1^{\s_1}\,A_2^{\s_2}\,\ldots\,A_n^{\s_n}=B^\ta \notag
\end{align}
whenever
\begin{align}
f\,A_1\,A_2\,\ldots\,A_n=B \notag
\end{align}
is true.  For example, because the constant $\mathrm{R}_\N$ denotes the operation of primitive recursion, every primitive recursive function of non-negative integers can be defined in $\T$, using the numerals $\overline{n}$ to represent the non-negative integers $n$.  Indeed, it has been shown~\cite{gK59,gK59b,wT65}
that the closed terms of type $\N\to\N\to\cdots\to\N$ in the language of $\T$ define exactly the $\lep$-recursive functions of non-negative integers.  In Section~\ref{s:func} we will show that when the $\beta\eta\T$-normal forms of types $\s$ and $\ta$ are restricted to be \emph{pure} closed $\beta\eta$-normal forms (that is, closed normal forms that do not contain any of the constants), then each functional of type $\s\to\ta$ that can be defined in G\"odel's theory $\T$ can be encoded as a $\lep$-recursive function of non-negative integers.  This result can naturally be extended to functionals of more than one argument.

\section{Examples of Primitive Recursion in $\T$}

Every primitive recursive function can be defined in $\T$.  For example, addition, multiplication, and predecessor are defined as follows.
\begin{align}
\mathrm{Add}^{\N\to\N\to\N}&=\lambda x^\N.\,\mathrm{R}_\N\,x\,(\lambda a^\N b^\N.\,\mathrm{S}_+\,a) \notag\\
\mathrm{Mult}^{\N\to\N\to\N}&=\lambda x^\N.\,\mathrm{R}_\N\,0_\N\,(\lambda a^\N b^\N.\,\mathrm{Add}\,a\,x) \notag\\
\mathrm{Pred}^{\N\to\N}&=\mathrm{R}_\N\,0_\N\,(\lambda a^\N b^\N.\,b) \notag
\end{align}
We write $A+B$ and $A\times B$ as abbreviations for $\mathrm{Add}\,A\,B$ and $\mathrm{Mult}\,A\,B$, respectively.  We define
\begin{align}
\mathrm{Monus}^{\N\to\N\to\N}=\lambda x^\N.\,\mathrm{R}_\N\,x\,(\lambda a^\N b^\N.\,\mathrm{Pred}\,a) \notag
\end{align}
and we write $A\dotminus B$ as an abbreviation for $\mathrm{Monus}\,A\,B$.  Note that for all non-negative integers $m$ and $n$, if $m\geq n$ then
\begin{align}
\T\vdash\overline{m}\dotminus\overline{n}=\overline{m-n} \notag
\end{align}
Otherwise, if $m<n$ then
\begin{align}
\T\vdash\overline{m}\dotminus\overline{n}=\overline{0} \notag
\end{align}
We write $\lvert A-B\rvert$ as an abbreviation for $(A\dotminus B)+(B\dotminus A)$.

The conditional function is defined as
\begin{align}
\mathrm{Cond}^{\N\to\N\to\N\to\N}=\lambda x^\N y^\N.\,\mathrm{R}_\N\,x\,(\lambda a^\N b^\N.\,y) \notag
\end{align}
For each non-negative integer $n$, if $n=0$ then
\begin{align}
\T\vdash\mathrm{Cond}\,x^\N\,y^\N\,\overline{n}=x \notag
\end{align}
Alternatively, if $n\ne0$ then
\begin{align}
\T\vdash\mathrm{Cond}\,x^\N\,y^\N\,\overline{n}=y \notag
\end{align}

Functionals can also be defined by primitive recursion.  For example, the summation functional is defined by
\begin{align}
\mathrm{Sum}^{\N\to(\N\to\N)\to\N}=\lambda x^\N f^{\N\to\N}.\,\mathrm{R}_\N\,0_\N\,\bigl(\lambda a^\N b^\N.\,a+(f\,b)\bigr)\,(\mathrm{S}_+\,x) \notag
\end{align}
It is common practice to write $\sum_{i=0}^nF\,\overline{i}$ as an abbreviation for $\mathrm{Sum}\,\overline{n}\,F$, where $i$ is a dummy variable.  Similarly, a functional for bounded maximization is defined by
\begin{align}
\mathrm{Max}_\leq^{\N\to(\N\to\N)\to\N}=\lambda x^\N f^{\N\to\N}.\,\mathrm{R}_\N\,0_\N\,\bigl(\lambda a^\N b^\N.\,\mathrm{Cond}\,b\,a\,(f\,b)\bigr)\,(\mathrm{S}_+\,x) \notag
\end{align}
Note that for each closed term $F^{\N\to\N}$ in the language of $\T$ and for each non-negative integer $n$, if $m$ is the largest non-negative integer less than or equal to $n$ such that $\T\vdash F\,\overline{m}=\overline{0}$, then
\begin{align}
\T\vdash\mathrm{Max}_\leq\,\overline{n}\,F=\overline{m} \notag
\end{align}
Otherwise, if no such $m$ exists, then
\begin{align}
\T\vdash\mathrm{Max}_\leq\,\overline{n}\,F=\overline{0} \notag
\end{align}
Division can be defined in terms of bounded maximization.
\begin{align}
\mathrm{Div}^{\N\to\N\to\N}=\lambda x^\N y^\N.\,\mathrm{Max}_\leq\,x\,\bigl(\lambda a^\N.\,(a\times y)\dotminus x\bigr) \notag
\end{align}
We write $\lfloor A/B\rfloor$ as an abbreviation for $\mathrm{Div}\,A\,B$.

\section{Pairs and Finite Sequences of Terms} \label{s:seq}

A variant of Cantor's pairing function~\cite{gC78} can be defined as follows.
\begin{align}
\mathrm{P}_0^{\N\to\N\to\N}=\lambda x^\N y^\N.\,\bigl\lfloor\bigl(x\times\bigl(x+\overline{3}\bigr)+y\times\bigl(y+\overline{1}\bigr)+\overline{2}\times x\times y\bigr)/\overline{2}\bigr\rfloor \notag
\end{align}
We write $\langle A,B\rangle$ as an abbreviation for $\mathrm{P}_0\,A\,B$.  For each non-negative integer $n$ there is exactly one pair of non-negative integers $m_1$ and $m_2$ such that
\begin{align}
\T\vdash\langle\overline{m_1},\overline{m_2}\rangle=\overline{n} \notag
\end{align}
And since $2m_1\leq m_1(m_1+3)$ and $2m_2\leq m_2(m_2+1)$ for all non-negative integers $m_1$ and $m_2$, it follows from the definition of $\mathrm{P}_0$ that $m_1\leq n$ and $m_2\leq n$.  (In fact, if $m_1\ne0$ then $m_1<n$ and $m_2<n$.)  Therefore, if we define
\begin{align}
\mathrm{P}_1^{\N\to\N}&=\lambda z^\N.\,\mathrm{Sum}\,z\,\bigl(\lambda y^\N.\,\mathrm{Max}_\leq\,z\,\bigl(\lambda x^\N.\,\bigl\lvert z-\langle x,y\rangle\bigr\rvert\bigr)\bigr) \notag\\
\mathrm{P}_2^{\N\to\N}&=\lambda z^\N.\,\mathrm{Sum}\,z\,\bigl(\lambda x^\N.\,\mathrm{Max}_\leq\,z\,\bigl(\lambda y^\N.\,\bigl\lvert z-\langle x,y\rangle\bigr\rvert\bigr)\bigr) \notag
\end{align}
then
\begin{align}
\T\vdash\mathrm{P}_1\,\langle\overline{m_1},\overline{m_2}\rangle&=\overline{m_1} \notag\\
\T\vdash\mathrm{P}_2\,\langle\overline{m_1},\overline{m_2}\rangle&=\overline{m_2} \notag
\end{align}
for each pair of non-negative integers $m_1$ and $m_2$.

Now, note that for each type $\ta$ there is a non-negative integer $n$ and there are types $\ta_1$, $\ta_2$,~$\ldots$~,~$\ta_n$ such that $\ta=\ta_1\to\ta_2\to\cdots\to\ta_n\to\N$.  Given any two types
\begin{align}
\s=&\s_1\to\s_2\to\cdots\to\s_m\to\N \notag\\
\ta=&\ta_1\to\ta_2\to\cdots\to\ta_n\to\N \notag
\end{align}
we define
\begin{align}
\s\times\ta=\s_1\to\s_2\to\cdots\to\s_m\to\ta_1\to\ta_2\to\cdots\to\ta_n\to(\N\to\N\to\N)\to\N \notag
\end{align}
Moreover, for each pair of types $\s$ and $\ta$, there are closed terms $\mathrm{D}_{0,\s,\ta}^{\s\to\ta\to\s\times\ta}$, $\mathrm{D}_{1,\s,\ta}^{\s\times\ta\to\s}$, and $\mathrm{D}_{2,\s,\ta}^{\s\times\ta\to\ta}$ such that
\begin{align}
\T\vdash\mathrm{D}_{1,\s,\ta}\,(\mathrm{D}_{0,\s,\ta}\,x^\s\,y^\ta)&=x \notag\\
\T\vdash\mathrm{D}_{2,\s,\ta}\,(\mathrm{D}_{0,\s,\ta}\,x^\s\,y^\ta)&=y \notag
\end{align}
Reference~\cite{hB74} provides explicit definitions for these terms.  $\mathrm{D}_{0,\s,\ta}$ is commonly known as Curry's pairing function.  We write $\{A_1^{\ta_1},A_2^{\ta_2}\}$ as an abbreviation for $\mathrm{D}_{0,\ta_1,\ta_2}\,A_1^{\ta_1}\,A_2^{\ta_2}$, and we write $\{A_1^{\ta_1},A_2^{\ta_2},\ldots,A_n^{\ta_n}\}$ as an abbreviation for the term
\begin{align}
\{A_1^{\ta_1},\{A_2^{\ta_2},\{\ldots,\{A_{n-1}^{\ta_{n-1}},A_n^{\ta_n}\}\ldots\}\}\} \notag
\end{align}
of type $\ta_1\times\ta_2\times\cdots\times\ta_n$.  Note that for each type $\ta=\ta_1\times\ta_2\times\cdots\times\ta_n$ such that $n>1$, and for each positive integer $i\leq n$, there is a closed term $\mathrm{D}_{i,\ta_1,\ta_2,\ldots,\ta_n}^{\ta\to\ta_i}$ such that
\begin{align}
\T\vdash\mathrm{D}_{i,\ta_1,\ta_2,\ldots,\ta_n}\,\{x_1^{\ta_1},x_2^{\ta_2},\ldots,x_n^{\ta_n}\}=x_i \notag
\end{align}

Now, for each type $\s=\s_1\to\s_2\to\cdots\to\s_m\to\N$ such that $m>0$, define
\begin{align}
0_\s^\s=\lambda x_1^{\s_1}x_2^{\s_2}\ldots x_m^{\s_m}.\,0_\N \notag
\end{align}
and for each type $\ta$ define
\begin{align}
\mathrm{Cons}_\ta^{\ta\to(\N\to\ta)\to\N\to\ta}=\lambda x^\ta y^{\N\to\ta}.\,\mathrm{R}_\ta\,x\,(\lambda a^\ta.\,y) \notag
\end{align}
We write $\bigl[A_0^\ta,A_1^\ta,\ldots,A_n^\ta\bigr]$ as an abbreviation for the term
\begin{align}
\mathrm{Cons}_\ta\,A_0\,(\mathrm{Cons}_\ta\,A_1\,(\cdots\,(\mathrm{Cons}_\ta\,A_n\,0_{\N\to\ta})\cdots)) \notag
\end{align}
of type $\N\to\ta$.  Note that
\begin{align}
&\T\vdash\bigl[x_0^\ta,x_1^\ta,\ldots,x_n^\ta\bigr]\,\overline{i}=x_i \notag\\
&\T\vdash\bigl[x_n^\ta,\ldots,x_1^\ta,x_0^\ta\bigr]\,\overline{n-i}=x_i \notag
\end{align}
for each non-negative integer $i\leq n$.

\section{Enumerating Pure Closed $\beta\eta$-Normal Forms}

Let $\mathfrak{F}_A$ denote the set of free variables in the term $A$.
\begin{lemma} \label{l:nf}
If $A^\ta$ is a pure $\beta\eta$-normal form, then one of the following three conditions must hold.
\begin{enumerate}
\item \label{e:atom} $A^\ta$ is a variable.
\item \label{e:lambda} $A^\ta$ is of the form $\lambda V^{\s_1}.\,B^{\s_2}$, where $\ta=\s_1\to\s_2$ and $B$ is a pure $\beta\eta$-normal form with free variables in the set $\{V\}\cup\mathfrak{F}_A$.
\item \label{e:compound} $A^\ta$ is of the form
\begin{align}
V^{\s_n\to\s_{n-1}\to\cdots\to\s_1\to\ta}\,B_n^{\s_n}\,B_{n-1}^{\s_{n-1}}\,\cdots\,B_1^{\s_1} \notag
\end{align}
where $V$ is a member of $\mathfrak{F}_A$ and $B_n$, $B_{n-1}$,~$\ldots$~,~$B_1$ are pure $\beta\eta$-normal forms with free variables in the set $\mathfrak{F}_A$.
\end{enumerate}
\end{lemma}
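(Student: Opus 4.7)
The plan is to proceed by structural induction on the pure $\beta\eta$-normal form $A^\ta$, exploiting the fact that every subterm of a pure $\beta\eta$-normal form is itself a pure $\beta\eta$-normal form. Since $A$ is a pure simply-typed lambda term, syntactically $A$ must be a variable, a lambda abstraction, or an application, and these three syntactic possibilities will correspond (after some rearrangement in the application case) to the three conditions in the statement.

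If $A$ is a variable, then condition (\ref{e:atom}) holds immediately. If $A$ is a lambda abstraction $\lambda V^{\s_1}.\,B^{\s_2}$, then the simply-typed structure forces $\ta=\s_1\to\s_2$, and since $A$ is a $\beta\eta$-normal form and $B$ is a subterm of $A$, $B$ is itself a pure $\beta\eta$-normal form; its free variables are contained in $\{V\}\cup\mathfrak{F}_A$, giving condition (\ref{e:lambda}).

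The interesting case is when $A$ is an application $C\,D$. Since $A$ is a $\beta$-normal form, $C$ cannot itself be a lambda abstraction, for otherwise $C\,D$ would contain a $\beta$-redex. The term $C$ is a pure $\beta\eta$-normal form (being a subterm of one) and strictly smaller than $A$, so by the induction hypothesis it falls under condition (\ref{e:atom}) or condition (\ref{e:compound}). In either case, $C$ has the shape $V^{\s_n\to\cdots\to\s_2\to(\s_1\to\ta)}\,B_n\,\cdots\,B_2$ for some variable $V\in\mathfrak{F}_C\subseteq\mathfrak{F}_A$ and pure normal forms $B_i$ (with $n=0$ when $C$ is itself a variable). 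Taking $B_1:=D$, which is pure $\beta\eta$-normal with free variables in $\mathfrak{F}_A$, we obtain condition (\ref{e:compound}) for $A$.

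I do not anticipate a serious obstacle: the proof is a routine structural induction on simply-typed normal forms. The only subtlety is to notice that the $\beta\eta$-normality of $A=C\,D$ is what forbids $C$ from being a lambda abstraction, so that the inductive classification of $C$ has no other case to consider; the $\eta$ part of normality never needs to be invoked directly in the argument, but it is compatible with the classification since the conditions (\ref{e:atom})--(\ref{e:compound}) are purely about $\beta$-normal head-form. Tracking the free-variable sets through each case is bookkeeping.
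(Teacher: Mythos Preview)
Your proposal is correct and follows essentially the same argument as the paper: a case split on the syntactic shape of $A$, with the application case handled by observing that the head of an application in $\beta$-normal form cannot be an abstraction, so one peels the left spine down to a head variable. The only cosmetic difference is that you phrase this as a single structural induction on $A$ (invoking the inductive hypothesis on the left factor $C$), whereas the paper runs the induction directly along the spine $C_1, C_2, \ldots$ in the application case; the content is identical, and your version tracks the free-variable bookkeeping a bit more explicitly than the paper does.
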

\begin{proof}
Any pure term $A^\ta$ must either be a variable, be of the form $\lambda V^{\s_1}.\,B^{\s_2}$ with $\ta=\s_1\to\s_2$, or be of the form $C_1^{\s_1\to\ta}\,B_1^{\s_1}$.  Condition~\ref{e:atom} and condition~\ref{e:lambda} follow immediately from the first two cases.  In the third case, if $A^\ta=C_1^{\s_1\to\ta}\,B_1^{\s_1}$ and $A^\ta$ is a pure $\beta\eta$-normal form, then $C_1^{\s_1\to\ta}$ must either be a variable or of the form $C_2^{\s_2\to\s_1\to\ta}\,B_2^{\s_2}$.  Therefore, by induction, $A^\ta$ must be of the form
\begin{align}
V^{\s_n\to\s_{n-1}\to\cdots\to\s_1\to\ta}\,B_n^{\s_n}\,B_{n-1}^{\s_{n-1}}\,\cdots\,B_1^{\s_1} \notag
\end{align}
for some positive integer $n$, where $V$ is a variable.  Condition~\ref{e:compound} immediately follows.
\end{proof}

For each non-negative integer $d$, let $\mathfrak{S}_{A,d}$ denote the set of \emph{subterms} of \emph{depth} $d$ in $A$.  That is, define $\mathfrak{S}_{A,0}$ to be the singleton set that contains only the term $A$, and for each non-negative integer $d$ define $B\in\mathfrak{S}_{A,d+1}$ if and only if $\mathfrak{S}_{A,d}$ contains a term of the form $\lambda V.\,B$, $B\,C$, or $C\,B$.  The set
\begin{align}
\mathfrak{S}_A=\mathfrak{S}_{A,0}\cup\mathfrak{S}_{A,1}\cup\mathfrak{S}_{A,2}\cup\cdots \notag
\end{align}
is the set of all subterms of $A$.

Similarly, if $\ta=\N$ then define $\s$ to be a \emph{subtype} of $\ta$ if and only if $\s=\N$.  Otherwise, if $\ta=\ta_1\to\ta_2$ then define $\s$ to be a subtype of $\ta$ if and only if $\s$ is a subtype of $\ta_1$, $\s$ is a subtype of $\ta_2$, or $\s=\ta$.
\begin{lemma} \label{l:subterm}
If $B^\s$ is a subterm of a pure closed $\beta\eta$-normal form $A^\ta$, then $\s$ is a subtype of $\ta$ and the type of each free variable in $B$ is a subtype of $\ta$.
\end{lemma}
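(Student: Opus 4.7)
The plan is to induct on the depth $d$ of subterms in $A$, proving the strengthened claim that for every $B\in\mathfrak{S}_{A,d}$, both the type of $B$ and the type of every free variable of $B$ are subtypes of $\ta$.  The base case $d=0$ is immediate: the only subterm is $A$ itself, $\ta$ is trivially a subtype of itself, and $\mathfrak{F}_A=\emptyset$ since $A$ is closed.

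For the inductive step, I would fix $B\in\mathfrak{S}_{A,d+1}$ together with a term $C\in\mathfrak{S}_{A,d}$ containing $B$ as an immediate subterm.  Any subterm of a pure $\beta\eta$-normal form is itself a pure $\beta\eta$-normal form, so Lemma \ref{l:nf} applies to $C$.  Case~(1) of that lemma produces no immediate subterms and is vacuous.  In case~(2), $C=\lambda V^{\s_1}.\,B^{\s_2}$ has type $\s_1\to\s_2$, which is a subtype of $\ta$ by the inductive hypothesis; unfolding the definition of subtype, both $\s_1$ and $\s_2$ are themselves subtypes of $\ta$.  Hence $B$ has type $\s_2$, a subtype of $\ta$, and its free variables lie in $\{V\}\cup\mathfrak{F}_C$, each of whose types is a subtype of $\ta$ by the preceding observation and by the inductive hypothesis.

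The decisive case is~(3), where $C$ has the form $V^{\s_n\to\cdots\to\s_1\to\rho}\,B_n\,\cdots\,B_1$ with $\rho$ denoting the type of $C$ and $V\in\mathfrak{F}_C$, so that $B$ is either $V\,B_n\,\cdots\,B_2$ (of type $\s_1\to\rho$) or $B_1$ (of type $\s_1$).  Applying the inductive hypothesis to the free variable $V$ of $C$ forces its type $\s_n\to\cdots\to\s_1\to\rho$ to be a subtype of $\ta$, and consequently every $\s_i$ and $\rho$ is a subtype of $\ta$; the types of both candidate immediate subterms are therefore subtypes of $\ta$, and their free variables lie in $\mathfrak{F}_C$, which is covered by the inductive hypothesis.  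The main subtlety — and the reason for carrying the free-variable claim as part of the inductive invariant — is that case~(3) routes the types of all the arguments $B_i$ through the curried type of the head \emph{variable}; without an inductive grip on that variable's type, there would be no handle on the types of the $B_i$.
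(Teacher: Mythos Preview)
Your proposal is correct and follows essentially the same argument as the paper: induction on the depth of subterms, carrying both the type claim and the free-variable claim together, and appealing to Lemma~\ref{l:nf} at each depth to split into the variable, abstraction, and head-variable application cases. The paper phrases the inductive step from the parent's side (apply Lemma~\ref{l:nf} to the depth-$d$ term and read off the depth-$(d+1)$ subterms), while you phrase it from the child's side (fix $B$ at depth $d+1$ and pick a parent $C$ at depth $d$), but this is only a cosmetic difference.
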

\begin{proof}
The proof is by induction on the depth of each subterm $B$ in $A$.  Suppose that $A^\ta$ is a pure closed $\beta\eta$-normal form.  As the base case, note that if $B^\s=A^\ta$ then $\s=\ta$ is a subtype of $\ta$ and $B$ has no free variables because $A$ is closed.  As the inductive hypothesis, suppose that $B^\s$ is a subterm of $A^\ta$ with $\s$ a subtype of $\ta$ and with the type of each member of $\mathfrak{F}_B$ a subtype of $\ta$.  Since $A$ is a pure $\beta\eta$-normal form, $B$ is a pure $\beta\eta$-normal form and $B$ satisfies one of the three conditions in Lemma~\ref{l:nf}.  In particular, if $B$ satisfies condition~\ref{e:atom}, then $B$ has no subterms except for itself.  Alternatively, if $B$ satisfies condition~\ref{e:lambda}, then $B^\s=\lambda V^{\s_1}.\,C^{\s_2}$ with $\s=\s_1\to\s_2$, and the free variables of $C$ are members of the set $\{V^{\s_1}\}\cup\mathfrak{F}_B$.  But because $\s=\s_1\to\s_2$ is a subtype of $\ta$ by the inductive hypothesis, $\s_1$ and $\s_2$ are subtypes of $\ta$.  So, the type of $C$ is a subtype of $\ta$ and the type of each free variable in $C$ is a subtype of $\ta$.  Finally, if $B$ satisfies condition~\ref{e:compound}, then
\begin{align} B^\s=V^{\s_n\to\s_{n-1}\to\cdots\to\s_1\to\s}\,C_n^{\s_n}\,C_{n-1}^{\s_{n-1}}\,\cdots\,C_1^{\s_1} \notag
\end{align}
where $V^{\s_n\to\s_{n-1}\to\cdots\to\s_1\to\s}$ is a variable.  But because $V$ is a free variable of $B$, it follows from the inductive hypothesis that $\s_n\to\s_{n-1}\to\cdots\to\s_1\to\s$ is a subtype of $\ta$, as are $\s_1$ and $\s_1\to\s$.  Hence, the type of $C_1$ is a subtype of $\ta$, as is the type of the subterm $V\,C_n\,C_{n-1}\,\cdots\,C_2$.  Of course, the free variables of these subterms are members of $\mathfrak{F}_B$, so the types of the free variables in these subterms are subtypes of $\ta$.
\end{proof}

Now, let $A^\ta$ be a pure closed $\beta\eta$-normal form and assume, without loss of generality, that any two distinct occurrences of $\lambda$ in $A$ bind variables with distinct names.\footnote{By convention, $\lambda x_1x_2\ldots x_m.\,B$ is an abbreviation for $\lambda x_1.\,(\lambda x_2.\,(\cdots(\lambda x_m.\,B)\cdots))$.  Hence, $\lambda x_1x_2\ldots x_m$ is an abbreviation for $m$ occurrences of $\lambda$.}
Let $\ta_1$, $\ta_2$,~$\ldots$~,~$\ta_n$ be all the subtypes of $\ta$ and let $\ta_1=\ta$.  For each term $B$ in $\mathfrak{S}_A$ define
\begin{align}
B\oslash A=
\begin{cases}
\bigl\langle\overline{0},\overline{d}\bigr\rangle &\text{if $(\lambda B.\,C)\in\mathfrak{S}_{A,d}$}\\
\bigl\langle\overline{j},\langle C\oslash A,D\oslash A\rangle\bigr\rangle &\text{if $B=C^{\ta_j\to\ta_i}\,D^{\ta_j}$}\\
\bigl\langle\overline{n+1},C\oslash A\bigr\rangle &\text{if $B=\lambda V.\,C$}
\end{cases} \notag
\end{align}
Note that for each pure closed $\beta\eta$-normal form $A^\ta$, $A\oslash A$ is a numeral that encodes $A^\ta$.  For example, if $A=\lambda x^{(\N\to\N)\to\N}.\,x\,(\lambda y^\N.\,y)$ and $\ta_3=\N\to\N$, then
\begin{align}
A\oslash A=\bigl\langle\overline{5},\bigl\langle\overline{3},\bigl\langle\bigl\langle\overline{0},\overline{0}\bigr\rangle,\bigl\langle\overline{5},\bigl\langle\overline{0},\overline{2}\bigr\rangle\bigr\rangle\bigr\rangle\bigr\rangle\bigr\rangle \notag
\end{align}

Next, define
\begin{multline}
\mathrm{A}_\ta^{\N\to\N\to\upsilon}=\lambda x^\N.\\
\,\mathrm{R}_{\N\to\upsilon}\,0_{\N\to\upsilon}\,\bigl(\lambda a^{\N\to\upsilon}b^\N.\,\mathrm{Cons}_\upsilon\,\{\mathrm{B}_{1,\ta}\,a\,b,\mathrm{B}_{2,\ta}\,a\,b,\ldots,\mathrm{B}_{n,\ta}\,a\,b\}\,a\bigr)\,(\mathrm{S}_+\,x) \notag
\end{multline}
and for each positive integer $i\leq n$ let
\begin{align}
\mathrm{B}_{i,\ta}^{(\N\to\upsilon)\to\N\to\upsilon_i}&=\lambda a^{\N\to\upsilon}b^\N.\,[\mathrm{J}_{0,i,\ta}\,a\,b,\mathrm{J}_{1,i,\ta}\,a\,b,\ldots,\mathrm{J}_{n+1,i,\ta}\,a\,b]\,(\mathrm{P}_1\,b) \notag\\
\mathrm{J}_{0,i,\ta}^{(\N\to\upsilon)\to\N\to\upsilon_i}&=\lambda a^{\N\to\upsilon}b^\N x_1^{\N\to\ta_1}x_2^{\N\to\ta_2}\ldots x_n^{\N\to\ta_n}y^\N.\,x_i\,(y\dotminus\overline{1}\dotminus\mathrm{P}_2\,b) \notag
\end{align}
where
\begin{align}
\upsilon_i=(\N\to\ta_1)\to(\N\to\ta_2)\to\cdots\to(\N\to\ta_n)\to\N\to\ta_i \notag
\end{align}
and $\upsilon=\upsilon_1\times\upsilon_2\times\cdots\times\upsilon_n$.  Furthermore, for each positive integer $j\leq n$, if there exists a positive integer $k\leq n$ such that $\ta_k=\ta_j\to\ta_i$ then define
\begin{multline}
\mathrm{J}_{j,i,\ta}^{(\N\to\upsilon)\to\N\to\upsilon_i}=\lambda a^{\N\to\upsilon}b^\N x_1^{\N\to\ta_1}\ldots x_n^{\N\to\ta_n}y^\N.\\
\,\Bigl(\mathrm{D}_{k,\upsilon_1,\ldots,\upsilon_n}\,\bigl(a\,\bigl(b\dotminus\overline{1}\dotminus\mathrm{P}_1\,(\mathrm{P}_2\,b)\bigr)\bigr)\,(\mathrm{Cons}_{\ta_1}\,0_{\ta_1}\,x_1)\,\cdots\,(\mathrm{Cons}_{\ta_n}\,0_{\ta_n}\,x_n)\,(\mathrm{S}_+\,y)\Bigr)\\
\,\Bigl(\mathrm{D}_{j,\upsilon_1,\ldots,\upsilon_n}\,\bigl(a\,\bigl(b\dotminus\overline{1}\dotminus\mathrm{P}_2\,(\mathrm{P}_2\,b)\bigr)\bigr)\,(\mathrm{Cons}_{\ta_1}\,0_{\ta_1}\,x_1)\,\cdots\,(\mathrm{Cons}_{\ta_n}\,0_{\ta_n}\,x_n)\,(\mathrm{S}_+\,y)\Bigr) \notag
\end{multline}
Otherwise, if no such $k$ exists, then define $\mathrm{J}_{j,i,\ta}=\lambda a^{\N\to\upsilon}b^\N.\,0_{\upsilon_i}$.   Similarly, if there exist positive integers $j\leq n$ and $k\leq n$ such that $\ta_i=\ta_j\to\ta_k$, then define
\begin{multline}
\mathrm{J}_{n+1,i,\ta}^{(\N\to\upsilon)\to\N\to\upsilon_i}=\lambda a^{\N\to\upsilon}b^\N x_1^{\N\to\ta_1}\ldots x_n^{\N\to\ta_n}y^\N z^{\ta_j}.\,\mathrm{D}_{k,\upsilon_1,\ldots,\upsilon_n}\\
\,\bigl(a\,(b\dotminus\overline{1}\dotminus\mathrm{P}_2\,b)\bigr)\,\bigl(\mathrm{Cons}_{\ta_1}\,(\mathrm{L}_{j,1}\,z)\,x_1\bigr)\,\cdots\,\bigl(\mathrm{Cons}_{\ta_n}\,(\mathrm{L}_{j,n}\,z)\,x_n\bigr)\,(\mathrm{S}_+\,y) \notag
\end{multline}
where
\begin{align}
\mathrm{L}_{j,l}^{\ta_j\to\ta_l}= \notag
\begin{cases}
\lambda z^{\ta_j}.\,z &\text{if $l=j$}\\
0_{\ta_j\to\ta_l} &\text{otherwise}
\end{cases}
\end{align}
for each positive integer $l\leq n$.  Otherwise, if no such $j$ and $k$ exist, then define $\mathrm{J}_{n+1,i,\ta}=\lambda a^{\N\to\upsilon}b^\N.\,0_{\upsilon_i}$.

\begin{lemma} \label{l:a}
For all non-negative integers $i$ and $j$,
\begin{align}
\T\vdash\mathrm{A}_\ta\,\overline{i+j}\;\overline{j}=\mathrm{A}_\ta\,\overline{i}\;\overline{0} \notag
\end{align}
\end{lemma}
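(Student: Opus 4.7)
The plan is to isolate the one-step identity
\[
\T\vdash\mathrm{A}_\ta\,\overline{k+1}\,(\mathrm{S}_+\,m)=\mathrm{A}_\ta\,\overline{k}\,m \qquad(\star)
\]
(valid for every non-negative integer $k$ and every term $m^\N$) and then induct on $j$.

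To establish $(\star)$, write $G=\lambda a^{\N\to\upsilon}b^\N.\,\mathrm{Cons}_\upsilon\,\{\mathrm{B}_{1,\ta}\,a\,b,\ldots,\mathrm{B}_{n,\ta}\,a\,b\}\,a$ for the step function inside the recursion, so that $\beta$-reduction gives $\mathrm{A}_\ta\,\overline{k}=\mathrm{R}_{\N\to\upsilon}\,0_{\N\to\upsilon}\,G\,\overline{k+1}$ for every $k$. One application of the successor axiom for $\mathrm{R}_{\N\to\upsilon}$ then yields $\T\vdash\mathrm{A}_\ta\,\overline{k+1}=G\,(\mathrm{A}_\ta\,\overline{k})\,\overline{k+1}$, which further $\beta$-reduces to $\mathrm{Cons}_\upsilon\,\{\ldots\}\,(\mathrm{A}_\ta\,\overline{k})$. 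The definition of $\mathrm{Cons}_\upsilon$ combined with the successor axiom for $\mathrm{R}_\upsilon$ gives $\T\vdash\mathrm{Cons}_\upsilon\,x\,y\,(\mathrm{S}_+\,m)=y\,m$ in general, and instantiating this with $y:=\mathrm{A}_\ta\,\overline{k}$ produces $(\star)$.

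Given $(\star)$, the lemma follows by metatheoretic induction on $j$. The base case $j=0$ is immediate. For the inductive step, apply $(\star)$ with $k=i+j$ and $m=\overline{j}$, using $\overline{j+1}=\mathrm{S}_+\,\overline{j}$, to obtain $\T\vdash\mathrm{A}_\ta\,\overline{i+j+1}\,\overline{j+1}=\mathrm{A}_\ta\,\overline{i+j}\,\overline{j}$, and then invoke the inductive hypothesis at the pair $(i,j)$. I do not anticipate a genuine obstacle here: the argument is a routine chain of $\beta\eta\T$-reductions driven entirely by the successor axioms for $\mathrm{R}_{\N\to\upsilon}$ and $\mathrm{R}_\upsilon$, and the intricate definitions of the $\mathrm{B}_{i,\ta}$ and $\mathrm{J}_{j,i,\ta}$ are irrelevant because they occur only inside the tuple $\{\ldots\}$ that is discarded by the $\mathrm{Cons}_\upsilon$-step in $(\star)$.
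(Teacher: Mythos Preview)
Your proof is correct and is essentially the paper's own argument: both proceed by induction on $j$, with the inductive step unfolding $\mathrm{A}_\ta\,\overline{i+j+1}$ via the recursor axiom to $\mathrm{Cons}_\upsilon\,\{\ldots\}\,(\mathrm{A}_\ta\,\overline{i+j})$ and then using the $\mathrm{Cons}_\upsilon$ successor clause to drop to $\mathrm{A}_\ta\,\overline{i+j}\,\overline{j}$. Your only cosmetic difference is that you isolate the one-step identity $(\star)$ beforehand (and state it for arbitrary $m^\N$ rather than just numerals), whereas the paper performs that unfolding inline inside the inductive step.
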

\begin{proof}
The proof is by induction on $j$.  The base case, when $j=0$, is trivial.  As the inductive hypothesis, suppose that 
\begin{align}
\T\vdash\mathrm{A}_\ta\,\overline{i+j}\;\overline{j}=\mathrm{A}_\ta\,\overline{i}\;\overline{0} \notag
\end{align}
By the definition of $\mathrm{A}_\ta$ we have that
\begin{multline}
\T\vdash\mathrm{A}_\ta\,\overline{i+j+1}\;\overline{j+1}=\mathrm{Cons}_\upsilon\,\Bigl\{\mathrm{B}_{1,\ta}\,\bigl(\mathrm{A}_\ta\,\overline{i+j}\bigr)\,\overline{i+j+1},\ldots,\\
\mathrm{B}_{n,\ta}\,\bigl(\mathrm{A}_\ta\,\overline{i+j}\bigr)\,\overline{i+j+1}\Bigr\}\,\bigl(\mathrm{A}_\ta\,\overline{i+j}\bigr)\,\overline{j+1} \notag
\end{multline}
and by the definition of $\mathrm{Cons}_\upsilon$ we have that
\begin{align}
\T\vdash\mathrm{A}_\ta\,\overline{i+j+1}\;\overline{j+1}=\mathrm{A}_\ta\,\overline{i+j}\;\overline{j} \notag
\end{align}
Then, by the inductive hypothesis,
\begin{align}
\T\vdash\mathrm{A}_\ta\,\overline{i+j+1}\;\overline{j+1}=\mathrm{A}_\ta\,\overline{i}\;\overline{0} \notag
\end{align}
\end{proof}

\begin{theorem} \label{t:a}
Let $A^\ta$ be a pure closed $\beta\eta$-normal form and let $B^{\ta_i}$ be a member of $\mathfrak{S}_{A,d}$.  If for each positive integer $l\leq n$, $X_l^{\N\to\ta_l}$ is a term such that
\begin{align}
\T\vdash X_l\,\overline{d-1-e}=V^{\ta_l} \notag
\end{align}
whenever $V\in\mathfrak{F}_B$ and $(\lambda V.\,C)\in\mathfrak{S}_{A,e}$ for some term $C$, then
\begin{align}
\T\vdash\mathrm{D}_{i,\ta_1,\ta_2,\ldots,\ta_n}\,\bigl(\mathrm{A}_\ta\,(B\oslash A)\,\overline{0}\bigr)\,X_1\,X_2\,\ldots\,X_n\,\overline{d}=B \notag
\end{align}
\end{theorem}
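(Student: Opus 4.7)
The plan is to proceed by structural induction on $B$ using the trichotomy of Lemma~\ref{l:nf}, letting the depth $d$ rise by one with each descent.  The opening move in every case is the same: unfold $\mathrm{A}_\ta\,(B\oslash A)\,\overline{0}$ by one step of primitive recursion, using the defining equation of $\mathrm{Cons}_\upsilon$ at $\overline{0}$, to obtain the tuple $\{\mathrm{B}_{1,\ta}\,a\,b,\ldots,\mathrm{B}_{n,\ta}\,a\,b\}$ with $b = B\oslash A$ and $a = \mathrm{A}_\ta\,\overline{b-1}$.  The projection $\mathrm{D}_{i,\ta_1,\ldots,\ta_n}$ extracts $\mathrm{B}_{i,\ta}\,a\,b$, which reduces, via the defining equations for the bracket lists, to $\mathrm{J}_{\mathrm{P}_1\,b,\,i,\,\ta}\,a\,b$.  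Since $\mathrm{P}_1\,b$ is $\overline{0}$, $\overline{j}$, or $\overline{n+1}$ in the variable, compound, and lambda cases of Lemma~\ref{l:nf} respectively, the three definitions of $\mathrm{J}$ line up exactly with the three cases of the induction.

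The variable case $B = V^{\ta_i}$, bound by some $\lambda V.\,C\in\mathfrak{S}_{A,e}$ with $e < d$, is handled by direct calculation: $\mathrm{J}_{0,i,\ta}\,a\,b\,X_1\,X_2\,\ldots\,X_n\,\overline{d}$ reduces to $X_i\,\overline{d-1-e}$, which equals $V$ by the assumption on $X_i$.  In the compound case $B = C^{\ta_j\to\ta_i}\,D^{\ta_j}$ the tag is $\overline{j}$ with $j\geq 1$, so the strict inequality mentioned after the definition of $\mathrm{P}_0$ in Section~\ref{s:seq} gives $C\oslash A < b$ and $D\oslash A < b$; Lemma~\ref{l:a} then collapses the two occurrences of $a\,(b\dotminus\overline{1}\dotminus\cdots)$ inside $\mathrm{J}_{j,i,\ta}$ to $\mathrm{A}_\ta\,(C\oslash A)\,\overline{0}$ and $\mathrm{A}_\ta\,(D\oslash A)\,\overline{0}$, respectively.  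The index $k$ with $\ta_k = \ta_j\to\ta_i$ required by the definition of $\mathrm{J}_{j,i,\ta}$ is provided by Lemma~\ref{l:subterm}.  I would then apply the inductive hypothesis to $C$ and $D$ at depth $d+1$, with each $X_l$ replaced by $X'_l = \mathrm{Cons}_{\ta_l}\,0_{\ta_l}\,X_l$.  Because $X'_l\,(\mathrm{S}_+\,\overline{m}) = X_l\,\overline{m}$ and every free variable of $C$ or $D$ already lies in $\mathfrak{F}_B$ and is bound at some $e < d$, the shifted hypothesis for $X'_l$ reduces to the original hypothesis for $X_l$, and the two outputs combine to give $C\,D = B$.

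The lambda case $B = \lambda V^{\ta_j}.\,C^{\ta_k}$ with $\ta_i = \ta_j\to\ta_k$ will be the main obstacle, since the newly bound variable $V$ is absent from $\mathfrak{F}_B$ but must appear in the reconstructed term.  After Lemma~\ref{l:a} reduces $a\,(b\dotminus\overline{1}\dotminus\mathrm{P}_2\,b)$ to $\mathrm{A}_\ta\,(C\oslash A)\,\overline{0}$, my plan is to $\eta$-expand by $V$ itself and compare the bodies.  Applying $\mathrm{J}_{n+1,i,\ta}\,a\,b\,X_1\,X_2\,\ldots\,X_n\,\overline{d}$ to $V$ exposes the shifted environment $X''_l = \mathrm{Cons}_{\ta_l}\,(\mathrm{L}_{j,l}\,V)\,X_l$, which by the definition of $\mathrm{L}_{j,l}$ satisfies $X''_j\,\overline{0} = V$ and $X''_l\,\overline{0} = 0_{\ta_l}$ for $l\neq j$.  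The shifted hypothesis thus holds in two regimes: the newly bound $V$, at binding depth $e = d$, is looked up as $\overline{0}$ in the slot matching its type and returns $V$, while each remaining free variable of $C$ lies in $\mathfrak{F}_B$ and is accessed via $X''_l\,(\mathrm{S}_+\,\overline{m}) = X_l\,\overline{m}$ exactly as in the compound case.  The inductive hypothesis on $C$ at depth $d+1$ then yields $C$, and re-abstracting over $V$ recovers $B$.  The care demanded by this case is entirely the bookkeeping for $\eta$-expansion and for the two-regime shifted environment; every other step in the proof is a mechanical unfolding of $\mathrm{Cons}$, $\mathrm{P}_1$, $\mathrm{P}_2$, Lemma~\ref{l:a}, and the projection laws for $\mathrm{D}_{k,\upsilon_1,\ldots,\upsilon_n}$.
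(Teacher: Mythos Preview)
Your proposal is correct and follows essentially the same route as the paper's own proof: the same one-step unfolding of $\mathrm{A}_\ta$, the same three-way case split on the shape of $B$, the same appeal to the pairing inequalities together with Lemma~\ref{l:a} to normalise the recursive calls, and the same depth-shifted environments $\mathrm{Cons}_{\ta_l}\,(\cdot)\,X_l$ in the compound and lambda cases. The only cosmetic difference is the induction variable---the paper performs strong induction on the numeral value of $B\oslash A$, whereas you induct structurally on $B$; since the immediate subterms of $B$ are exactly what appear in the recursive calls, and you still invoke the numerical inequality solely to apply Lemma~\ref{l:a}, the two inductions are interchangeable here.
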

\begin{proof}
The proof is by induction on $B\oslash A$.  For the base case, note that if $\T\vdash B\oslash A=\overline{0}$ then $B\oslash A=\bigl\langle\overline{0},\overline{0}\bigr\rangle$.  Therefore, $B$ is a variable and $(\lambda B.\,C)\in\mathfrak{S}_{A,0}$ for some term $C$.  Now,
\begin{multline}
\T\vdash\mathrm{A}_\ta\,(B\oslash A)\,\overline{0}=\\
\mathrm{R}_{\N\to\upsilon}\,0_{\N\to\upsilon}\,\bigl(\lambda a^{\N\to\upsilon}b^\N.\,\mathrm{Cons}_\upsilon\,\{\mathrm{B}_{1,\ta}\,a\,b,\ldots,\mathrm{B}_{n,\ta}\,a\,b\}\,a\bigr)\,\bigl(\mathrm{S}_+\,\overline{0}\bigr)\,\overline{0} \notag
\end{multline}
and so
\begin{align}
\T\vdash\mathrm{A}_\ta\,(B\oslash A)\,\overline{0}&=\mathrm{Cons}_\upsilon\,\{\mathrm{B}_{1,\ta}\,0_{\N\to\upsilon}\,\overline{0},\ldots,\mathrm{B}_{n,\ta}\,0_{\N\to\upsilon}\,\overline{0}\}\,0_{\N\to\upsilon}\,\overline{0} \notag\\
\T\vdash\mathrm{A}_\ta\,(B\oslash A)\,\overline{0}&=\{\mathrm{B}_{1,\ta}\,0_{\N\to\upsilon}\,\overline{0},\ldots,\mathrm{B}_{n,\ta}\,0_{\N\to\upsilon}\,\overline{0}\} \notag
\end{align}
Therefore,
\begin{align}
\T\vdash\mathrm{D}_{i,\ta_1,\ldots,\ta_n}\,\bigl(\mathrm{A}_\ta\,(B\oslash A)\,\overline{0}\bigr)\,X_1\,\ldots\,X_n\,\overline{d}&=\mathrm{B}_{i,\ta}\,0_{\N\to\upsilon}\,\overline{0}\,X_1\,\ldots\,X_n\,\overline{d} \notag\\
\T\vdash\mathrm{D}_{i,\ta_1,\ldots,\ta_n}\,\bigl(\mathrm{A}_\ta\,(B\oslash A)\,\overline{0}\bigr)\,X_1\,\ldots\,X_n\,\overline{d}&=\mathrm{J}_{0,i,\ta}\,0_{\N\to\upsilon}\,\overline{0}\,X_1\,\ldots\,X_n\,\overline{d} \notag\\
\T\vdash\mathrm{D}_{i,\ta_1,\ldots,\ta_n}\,\bigl(\mathrm{A}_\ta\,(B\oslash A)\,\overline{0}\bigr)\,X_1\,\ldots\,X_n\,\overline{d}&=X_i\,\bigl(\overline{d-1}\dotminus\mathrm{P}_2\,\overline{0}\bigr) \notag\\
\T\vdash\mathrm{D}_{i,\ta_1,\ldots,\ta_n}\,\bigl(\mathrm{A}_\ta\,(B\oslash A)\,\overline{0}\bigr)\,X_1\,\ldots\,X_n\,\overline{d}&=X_i\,\overline{d-1} \notag
\end{align}
But if $\T\vdash X_i\,\overline{d-1}=B$ then
\begin{align}
\T\vdash\mathrm{D}_{i,\ta_1,\ldots,\ta_n}\,\bigl(\mathrm{A}_\ta\,(B\oslash A)\,\overline{0}\bigr)\,X_1\,\ldots\,X_n\,\overline{d}=B \notag
\end{align}

As the inductive hypothesis, let $m$ be a non-negative integer and suppose, for all positive integers $i\leq n$ and all non-negative integers $d$, that if $B^{\ta_i}$ is a member of $\mathfrak{S}_{A,d}$ such that $B\oslash A$ is less than or equal to $\overline{m}$, then the statement of the theorem holds.  Now consider any term $B^{\ta_i}$ with $\T\vdash B\oslash A=\overline{m+1}$ and such that $B\in\mathfrak{S}_{A,d}$.  By the same sort of reasoning as in the base case, we have that
\begin{align}
\T\vdash\mathrm{D}_{i,\ta_1,\ldots,\ta_n}\,\bigl(\mathrm{A}_\ta\,(B\oslash A)\,\overline{0}\bigr)\,X_1\,\ldots\,X_n\,\overline{d}=\mathrm{B}_{i,\ta}\,(\mathrm{A}_\ta\,\overline{m}\,)\,(B\oslash A)\,X_1\,\ldots\,X_n\,\overline{d} \notag
\end{align}
Now, by Lemma~\ref{l:subterm} every subterm of $B$ must have a type that is a subtype of $\ta$ and have free variables with types that are subtypes of $\ta$.  Hence, there are three possibilities: $B^{\ta_i}$ is a variable, $B^{\ta_i}=C^{\ta_k}\,D^{\ta_j}$, or $B^{\ta_i}=\lambda V^{\ta_j}.\,C^{\ta_k}$.  We will consider each of these possibilities separately.  First, if $B$ is a variable then $B\oslash A=\bigl\langle\overline{0},\overline{e}\bigr\rangle$ and there must exist a term $C$ such that $(\lambda B.\,C)\in\mathfrak{S}_{A,e}$.  In this case,
\begin{align}
\T\vdash\mathrm{D}_{i,\ta_1,\ldots,\ta_n}\,\bigl(\mathrm{A}_\ta\,(B\oslash A)\,\overline{0}\bigr)\,X_1\,\ldots\,X_n\,\overline{d}&=\mathrm{J}_{0,i,\ta}\,(\mathrm{A}_\ta\,\overline{m}\,)\,(B\oslash A)\,X_1\,\ldots\,X_n\,\overline{d} \notag\\
\T\vdash\mathrm{D}_{i,\ta_1,\ldots,\ta_n}\,\bigl(\mathrm{A}_\ta\,(B\oslash A)\,\overline{0}\bigr)\,X_1\,\ldots\,X_n\,\overline{d}&=X_i\,\overline{d-1-e} \notag
\end{align}
And if $\T\vdash X_i\,\overline{d-1-e}=B$ then
\begin{align}
\T\vdash\mathrm{D}_{i,\ta_1,\ldots,\ta_n}\,\bigl(\mathrm{A}_\ta\,(B\oslash A)\,\overline{0}\bigr)\,X_1\,\ldots\,X_n\,\overline{d}=B \notag
\end{align}

Alternatively, if $B^{\ta_i}=C^{\ta_k}\,D^{\ta_j}$ for some positive integers $j\leq n$ and $k\leq n$, then $B\oslash A=\bigl\langle\overline{j},\langle C\oslash A,D\oslash A\rangle\bigr\rangle$ and
\begin{align}
\T\vdash\mathrm{D}_{i,\ta_1,\ldots,\ta_n}\,\bigl(\mathrm{A}_\ta\,(B\oslash A)\,\overline{0}\bigr)\,X_1\,\ldots\,X_n\,\overline{d}=\mathrm{J}_{j,i,\ta}\,(\mathrm{A}_\ta\,\overline{m}\,)\,(B\oslash A)\,X_1\,\ldots\,X_n\,\overline{d} \notag
\end{align}
Therefore,
\begin{multline}
\T\vdash\mathrm{D}_{i,\ta_1,\ldots,\ta_n}\,\bigl(\mathrm{A}_\ta\,(B\oslash A)\,\overline{0}\bigr)\,X_1\,\ldots\,X_n\,\overline{d}=\\
\Bigl(\mathrm{D}_{k,\upsilon_1,\ldots,\upsilon_n}\,\bigl(\mathrm{A}_\ta\,\overline{m}\,\bigl(\overline{m}\dotminus(C\oslash A)\bigr)\bigr)\,(\mathrm{Cons}_{\ta_1}\,0_{\ta_1}\,X_1)\,\cdots\,(\mathrm{Cons}_{\ta_n}\,0_{\ta_n}\,X_n)\,\overline{d+1}\Bigr)\\
\Bigl(\mathrm{D}_{j,\upsilon_1,\ldots,\upsilon_n}\,\bigl(\mathrm{A}_\ta\,\overline{m}\,\bigl(\overline{m}\dotminus(D\oslash A)\bigr)\bigr)\,(\mathrm{Cons}_{\ta_1}\,0_{\ta_1}\,X_1)\,\cdots\,(\mathrm{Cons}_{\ta_n}\,0_{\ta_n}\,X_n)\,\overline{d+1}\Bigr) \notag
\end{multline}
Note, by the definition of Cantor's pairing function, that $C\oslash A$ and $D\oslash A$ are less than or equal to $\overline{m}$ because
\begin{align}
\T\vdash\overline{m+1}=\bigl\langle\overline{j},\langle C\oslash A,D\oslash A\rangle\bigr\rangle \notag
\end{align}
and $j\ne 0$.  It then follows from Lemma~\ref{l:a} that
\begin{multline}
\T\vdash\mathrm{D}_{i,\ta_1,\ldots,\ta_n}\,\bigl(\mathrm{A}_\ta\,(B\oslash A)\,\overline{0}\bigr)\,X_1\,\ldots\,X_n\,\overline{d}=\\
\Bigl(\mathrm{D}_{k,\upsilon_1,\ldots,\upsilon_n}\,\bigl(\mathrm{A}_\ta\,(C\oslash A)\,\overline{0}\bigr)\,(\mathrm{Cons}_{\ta_1}\,0_{\ta_1}\,X_1)\,\cdots\,(\mathrm{Cons}_{\ta_n}\,0_{\ta_n}\,X_n)\,\overline{d+1}\Bigr)\\
\Bigl(\mathrm{D}_{j,\upsilon_1,\ldots,\upsilon_n}\,\bigl(\mathrm{A}_\ta\,(D\oslash A)\,\overline{0}\bigr)\,(\mathrm{Cons}_{\ta_1}\,0_{\ta_1}\,X_1)\,\cdots\,(\mathrm{Cons}_{\ta_n}\,0_{\ta_n}\,X_n)\,\overline{d+1}\Bigr) \notag
\end{multline}
But if for each positive integer $l\leq n$, $X_l^{\N\to\ta_l}$ is a term such that
\begin{align}
\T\vdash X_l\,\overline{d-1-e}=V^{\ta_l} \notag
\end{align}
whenever $V\in\mathfrak{F}_B$ and $(\lambda V.\,E)\in\mathfrak{S}_{A,e}$ for some term $E$, then
\begin{align}
\T\vdash \mathrm{Cons}_{\ta_l}\,0_{\ta_l}\,X_l\,\overline{d+1-1-e}=V^{\ta_l} \notag
\end{align}
whenever $V\in\mathfrak{F}_C$ and $(\lambda V.\,E)\in\mathfrak{S}_{A,e}$, and similarly for $D$.  Hence, by the inductive hypothesis,
\begin{align}
\T\vdash\mathrm{D}_{i,\ta_1,\ldots,\ta_n}\,\bigl(\mathrm{A}_\ta\,(B\oslash A)\,\overline{0}\bigr)\,X_1\,\ldots\,X_n\,\overline{d}&=C\,D \notag\\
\T\vdash\mathrm{D}_{i,\ta_1,\ldots,\ta_n}\,\bigl(\mathrm{A}_\ta\,(B\oslash A)\,\overline{0}\bigr)\,X_1\,\ldots\,X_n\,\overline{d}&=B \notag
\end{align}

The final case to be considered is when $B^{\ta_i}=\lambda V^{\ta_j}.\,C^{\ta_k}$ for some positive integers $j\leq n$ and $k\leq n$.  In this case, $B\oslash A=\bigl\langle\overline{n+1},C\oslash A\bigr\rangle$ and 
\begin{align}
\T\vdash\mathrm{D}_{i,\ta_1,\ldots,\ta_n}\,\bigl(\mathrm{A}_\ta\,(B\oslash A)\,\overline{0}\bigr)\,X_1\,\ldots\,X_n\,\overline{d}=\mathrm{J}_{n+1,i,\ta}\,(\mathrm{A}_\ta\,\overline{m}\,)\,(B\oslash A)\,X_1\,\ldots\,X_n\,\overline{d} \notag
\end{align}
Therefore,
\begin{multline}
\T\vdash\mathrm{D}_{i,\ta_1,\ldots,\ta_n}\,\bigl(\mathrm{A}_\ta\,(B\oslash A)\,\overline{0}\bigr)\,X_1\,\ldots\,X_n\,\overline{d}=\lambda z^{\ta_j}.\,\mathrm{D}_{k,\upsilon_1,\ldots,\upsilon_n}\\
\,\bigl(\mathrm{A}_\ta\,\overline{m}\,\bigl(\overline{m}\dotminus(C\oslash A)\bigr)\bigr)\,\bigl(\mathrm{Cons}_{\ta_1}\,(\mathrm{L}_{j,1}\,z)\,X_1\bigr)\,\cdots\,\bigl(\mathrm{Cons}_{\ta_n}\,(\mathrm{L}_{j,n}\,z)\,X_n\bigr)\,\overline{d+1} \notag
\end{multline}
As in the previous case, it follows from the definition of Cantor's pairing function that $C\oslash A$ is less than or equal to $\overline{m}$.  So, by Lemma~\ref{l:a} we have that
\begin{multline}
\T\vdash\mathrm{D}_{i,\ta_1,\ldots,\ta_n}\,\bigl(\mathrm{A}_\ta\,(B\oslash A)\,\overline{0}\bigr)\,X_1\,\ldots\,X_n\,\overline{d}=\lambda z^{\ta_j}.\,\mathrm{D}_{k,\upsilon_1,\ldots,\upsilon_n}\,\\
\bigl(\mathrm{A}_\ta\,(C\oslash A)\,\overline{0}\bigr)\,\bigl(\mathrm{Cons}_{\ta_1}\,(\mathrm{L}_{j,1}\,z)\,X_1\bigr)\,\cdots\,\bigl(\mathrm{Cons}_{\ta_n}\,(\mathrm{L}_{j,n}\,z)\,X_n\bigr)\,\overline{d+1} \notag
\end{multline}
And if for each positive integer $l\leq n$, $X_l^{\N\to\ta_l}$ is a term such that
\begin{align}
\T\vdash X_l\,\overline{d-1-e}=V^{\ta_l} \notag
\end{align}
whenever $V\in\mathfrak{F}_B$ and $(\lambda V.\,D)\in\mathfrak{S}_{A,e}$ for some term $D$, then
\begin{align}
\T\vdash \mathrm{Cons}_{\ta_l}\,0_{\ta_l}\,X_l\,\overline{d+1-1-e}=V^{\ta_l} \notag
\end{align}
whenever $V\in\mathfrak{F}_C$, $(\lambda V.\,D)\in\mathfrak{S}_{A,e}$, and $l\ne j$, because the free variables of type $\ta_l$ in $C$ are also the free variables of type $\ta_l$ in $B$ when $l\ne j$.  But $B$ has exactly one more bound variable of type $\ta_j$ than $C$.  Assume, without loss of generality, that this variable is $z^{\ta_j}$.  Then,
\begin{align}
\T\vdash \mathrm{Cons}_{\ta_j}\,z\,X_j\,\overline{d+1-1-e}=V^{\ta_j} \notag
\end{align}
whenever $V\in\mathfrak{F}_C$ and $(\lambda V.\,D)\in\mathfrak{S}_{A,e}$.  It immediately follows from the inductive hypothesis and from the definition of $\mathrm{L}_{j,l}$ that
\begin{align}
\T\vdash\mathrm{D}_{i,\ta_1,\ldots,\ta_n}\,\bigl(\mathrm{A}_\ta\,(B\oslash A)\,\overline{0}\bigr)\,X_1\,\ldots\,X_n\,\overline{d}&=\lambda z^{\ta_j}.\,C \notag\\
\T\vdash\mathrm{D}_{i,\ta_1,\ldots,\ta_n}\,\bigl(\mathrm{A}_\ta\,(B\oslash A)\,\overline{0}\bigr)\,X_1\,\ldots\,X_n\,\overline{d}&=B \notag
\end{align}
\end{proof}

Now define
\begin{align}
\mathrm{E}_\ta^{\N\to\ta}=\lambda x^\N.\,\mathrm{D}_{1,\ta_1,\ta_2,\ldots,\ta_n}\,\bigl(\mathrm{A}_\ta\,x\,\overline{0}\bigr)\,0_{\N\to\ta_1}\,0_{\N\to\ta_2}\,\ldots\,0_{\N\to\ta_n}\,\overline{0} \notag
\end{align}
and note that by Theorem~\ref{t:a}
\begin{align}
\T\vdash\mathrm{E}_\ta\,(A\oslash A)=A \notag
\end{align}
for all pure closed $\beta\eta$-normal forms $A^\ta$.  The term $\mathrm{E}_\ta$ is said to be an \emph{enumerator} for the pure closed $\beta\eta$-normal forms of type $\ta$.

\section{Type Reducibility} \label{s:statman}

The following theorem asserts that each type $\ta$ is $\beta\eta$-reducible to the type $(\N\to\N\to\N)\to\N\to\N$.
\begin{statman}
For each type $\ta$ there exists a pure closed term $\mathrm{M}_\ta$ of type $\ta\to(\N\to\N\to\N)\to\N\to\N$ such that for all pure closed terms $A^\ta$ and $B^\ta$
\begin{align}
\T\vdash\mathrm{M}_\ta\,A=\mathrm{M}_\ta\,B \notag
\end{align}
if and only if
\begin{align}
\T\vdash A=B \notag
\end{align}
\end{statman}
\begin{proof}
See references~\cite{rS80,BDS11}.
\end{proof}

In fact, in the context of the theory $\T$ we can prove a somewhat stronger theorem.
\begin{theorem} \label{t:reduce}
For each type $\ta$ there exists a closed term $\mathrm{N}_\ta^{\ta\to\N}$
in the language of $\T$ such that for all pure closed terms $A^\ta$ and $B^\ta$
\begin{align}
\T\vdash\mathrm{N}_\ta\,A=\mathrm{N}_\ta\,B \notag
\end{align}
if and only if
\begin{align}
\T\vdash A=B \notag
\end{align}
\end{theorem}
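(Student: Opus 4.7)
The plan is to reduce $\ta$ all the way down to $\N$ by composing Statman's $\mathrm{M}_\ta$ with a $\T$-definable encoding of pure closed $\beta\eta$-normal forms of type $(\N\to\N\to\N)\to\N\to\N$ as numerals. Taking $\mathrm{M}_\ta$ from Statman's Type-Reducibility Theorem and $\mathrm{P}_0$ from Section~\ref{s:seq}, I will define
\[\mathrm{N}_\ta=\lambda a^\ta.\,\mathrm{M}_\ta\,a\,\mathrm{P}_0\,\overline{1}.\]
The forward direction is then immediate: $\T\vdash A=B$ implies $\T\vdash\mathrm{M}_\ta\,A=\mathrm{M}_\ta\,B$ by Statman, and substitution of equality gives $\T\vdash\mathrm{N}_\ta\,A=\mathrm{N}_\ta\,B$.

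For the converse, it suffices to show that the map $C\mapsto C\,\mathrm{P}_0\,\overline{1}$ is $\T$-faithful on pure closed terms $C$ of type $(\N\to\N\to\N)\to\N\to\N$. A straightforward analysis via Lemma~\ref{l:nf} shows that each such $C$ has a unique $\beta\eta$-normal form of the shape $\lambda f^{\N\to\N\to\N}x^\N.\,T$, where $T$ is a finite term of type $\N$ built solely from the variable $x$ at the leaves and binary applications $f\,t_1\,t_2$ at the internal nodes; these normal forms are therefore in bijection with non-empty finite binary trees. Applying such a normal form to $\mathrm{P}_0$ and $\overline{1}$ substitutes $f\mapsto\mathrm{P}_0$ and $x\mapsto\overline{1}$, and $\T$-reduction collapses the tree into a single numeral: every leaf contributes $\overline{1}$, and every internal node contributes $\langle\cdot,\cdot\rangle$ of its children's values.

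The central step is injectivity of this tree encoding, which I prove by induction on tree size. The inductive step rests on injectivity of $\mathrm{P}_0$: the retractions $\mathrm{P}_1$ and $\mathrm{P}_2$ from Section~\ref{s:seq} force $\T\vdash\langle\overline{a},\overline{b}\rangle=\langle\overline{c},\overline{d}\rangle$ to imply $a=c$ and $b=d$. The base case requires separating leaves from internal nodes: every subtree encoding is at least $\overline{1}$, and $\mathrm{P}_0$ is strictly monotone in each argument, so every internal-node encoding is at least $\mathrm{P}_0\,\overline{1}\,\overline{1}=\overline{4}$ and cannot coincide with the leaf value $\overline{1}$. With injectivity in hand, $\T\vdash\mathrm{N}_\ta\,A=\mathrm{N}_\ta\,B$ forces the trees corresponding to $\mathrm{M}_\ta\,A$ and $\mathrm{M}_\ta\,B$ to agree, so these two pure closed terms share a $\beta\eta$-normal form; hence $\T\vdash\mathrm{M}_\ta\,A=\mathrm{M}_\ta\,B$, and Statman's theorem gives $\T\vdash A=B$.

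The main obstacle is the choice of leaf value. Using $\overline{0}$ would be fatal, because $\langle\overline{0},\overline{0}\rangle=\overline{0}$ would collapse the single-leaf tree $\lambda f x.\,x$ with $\lambda f x.\,f\,x\,x$, destroying injectivity. The shift to $\overline{1}$ pushes every internal-node encoding to at least $\overline{4}$ and cleanly separates it from the leaf value, which is exactly what the inductive argument needs.
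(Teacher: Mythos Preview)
Your proof is correct and follows essentially the same approach as the paper: invoke Statman's theorem, classify the pure closed $\beta\eta$-normal forms of type $(\N\to\N\to\N)\to\N\to\N$ as binary trees via Lemma~\ref{l:nf}, and then inject trees into numerals by plugging in a pairing-based node function and a numeral leaf. The only cosmetic difference is the choice of encoding: the paper takes $\mathrm{N}_\ta=\lambda x^\ta.\,\mathrm{M}_\ta\,x\,(\lambda c\,d.\,\mathrm{S}_+\,\langle c,d\rangle)\,\overline{0}$, using the successor to separate internal nodes ($\geq\overline{1}$) from the leaf value $\overline{0}$, whereas you achieve the same separation by shifting the leaf to $\overline{1}$ and relying on monotonicity of $\mathrm{P}_0$ to push internal nodes to $\geq\overline{4}$.
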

\begin{proof}
The type $(\N\to\N\to\N)\to\N\to\N$ is the type of binary trees~\cite{BDS11}.  That is, it can be shown by Lemma~\ref{l:nf} that every pure closed $\beta\eta$-normal form of type $(\N\to\N\to\N)\to\N\to\N$ is of the form $\lambda x^{\N\to\N\to\N}y^\N.\,A^\N$, where the free variables of $A^\N$ are members of the the set $\{x^{\N\to\N\to\N},y^\N\}$.  But again by Lemma~\ref{l:nf}, it must be the case that either $A^\N=y^\N$ or $A^\N$ is of the form $x^{\N\to\N\to\N}\,C^\N\,D^\N$, where the free variables of $C^\N$ and $D^\N$ are members of the the set $\{x^{\N\to\N\to\N},y^\N\}$.  The same argument applies to the subterms $C$ and $D$ themselves.  Hence, $A$ is a binary tree with leaves $y$ and branching nodes $x$.  Furthermore, each tree $A$ can be assigned a numeral $\lVert A\rVert$ by letting $\lVert y\rVert=\overline{0}$, and by letting $\lVert x\,C\,D\rVert=\mathrm{S}_+\,\bigl\langle\lVert C\rVert,\lVert D\rVert\bigr\rangle$.  Note that no two distinct trees are assigned numerals for the same non-negative integer.

Now, for each type $\ta$ define
\begin{align}
\mathrm{N}_\ta^{\ta\to\N}=\lambda x^\ta.\,\mathrm{M}_\ta\,x\,\bigl(\lambda c^\N d^\N.\,\mathrm{S}_+\,\langle c,d\rangle\bigr)\,\overline{0} \notag
\end{align}
By Statman's Type-Reducibility Theorem, for any two distinct pure closed $\beta\eta$-normal forms $A^\ta$ and $B^\ta$, $\mathrm{M}_\ta\,A$ and $\mathrm{M}_\ta\,B$ must have distinct pure closed $\beta\eta$-normal forms of type $(\N\to\N\to\N)\to\N\to\N$.  Then,
\begin{align}
\T\vdash\mathrm{N}_\ta\,A&=\mathrm{M}_\ta\,A\,\bigl(\lambda c^\N d^\N.\,\mathrm{S}_+\,\langle c,d\rangle\bigr)\,\overline{0} \notag\\
\T\vdash\mathrm{N}_\ta\,B&=\mathrm{M}_\ta\,B\,\bigl(\lambda c^\N d^\N.\,\mathrm{S}_+\,\langle c,d\rangle\bigr)\,\overline{0} \notag
\end{align}
Therefore, $\mathrm{N}_\ta\,A$ and $\mathrm{N}_\ta\,B$ have distinct $\beta\eta\T$-normal forms because the $\beta\eta\T$-normal form of $\mathrm{N}_\ta\,A$ is the numeral assigned to the tree in $\mathrm{M}_\ta\,A$, and the $\beta\eta\T$-normal form of $\mathrm{N}_\ta\,B$ is the numeral assigned to the tree in $\mathrm{M}_\ta\,B$.
\end{proof}

\section{Functions of Pure Closed $\beta\eta$-Normal Forms} \label{s:func}

We have described two effective procedures for encoding pure closed $\beta\eta$-normal forms as non-negative integers.  First, a pure closed $\beta\eta$-normal form $A^\ta$ can be encoded as the non-negative integer $n$ such that $\T\vdash\overline{n}=A\oslash A$.  Note that no two distinct pure closed $\beta\eta$-normal forms are encoded as the same non-negative integer, since
\begin{align}
\T\vdash\mathrm{E}_\ta\,(A\oslash A)=A \notag
\end{align}
Alternatively, $A^\ta$ can be encoded as the non-negative integer $n$ such that $\T\vdash\overline{n}=\mathrm{N}_\ta\,A$.  It then follows from Theorem~\ref{t:reduce} that no two distinct pure closed $\beta\eta$-normal forms are encoded as the same non-negative integer.

For any function $f$ from pure closed $\beta\eta$-normal forms of type $\s$ to pure closed $\beta\eta$-normal forms of type $\ta$, define $f_\mathcal{U}$ so that if $f\,A^\s=B^\ta$ then $f_\mathcal{U}\,a=b$ where
\begin{align}
\T\vdash\overline{a}&=A\oslash A \notag\\
\T\vdash\overline{b}&=\mathrm{N}_\ta\,B \notag
\end{align}
We say that $f_\mathcal{U}$ is a $\mathcal{U}$-encoding of $f$.  Similarly, define $f_\mathcal{V}$ so that if $f\,A^\s=B^\ta$ then $f_\mathcal{V}\,a=b$ where
\begin{align}
\T\vdash\overline{a}&=\mathrm{N}_\s\,A \notag\\
\T\vdash\overline{b}&=B\oslash B \notag
\end{align}
We say that $f_\mathcal{V}$ is a $\mathcal{V}$-encoding of $f$.

\begin{theorem} \label{t:u}
Let $f$ be any function from pure closed $\beta\eta$-normal forms of type $\s$ to pure closed $\beta\eta$-normal forms of type $\ta$.  If $f$ can be defined in G\"odel's theory $\T$, then $f$ has a $\mathcal{U}$-encoding that is a $\lep$-recursive function of non-negative integers.
\end{theorem}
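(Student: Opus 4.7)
My plan is to compose the decoder $\mathrm{E}_\s$ for the source type with the defining term $F$ and the encoder $\mathrm{N}_\ta$ for the target type, thus reducing the theorem to the characterization of $\lep$-recursive functions cited in the introduction. Suppose $f$ is defined in $\T$ by a closed term $F^{\s\to\ta}$, and set
\begin{align}
G^{\N\to\N}=\lambda x^\N.\,\mathrm{N}_\ta\,\bigl(F\,(\mathrm{E}_\s\,x)\bigr) \notag
\end{align}
This $G$ is a closed term of $\T$ of type $\N\to\N$.

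Next I would verify that $G$ represents $f_\mathcal{U}$ on every code. Fix a pure closed $\beta\eta$-normal form $A^\s$ and let $B=f\,A$, which by hypothesis is a pure closed $\beta\eta$-normal form of type $\ta$. Write $\overline{a}$ and $\overline{b}$ for the numerals with $\T\vdash\overline{a}=A\oslash A$ and $\T\vdash\overline{b}=\mathrm{N}_\ta\,B$, so that $f_\mathcal{U}(a)=b$ by definition. Then
\begin{align}
\T\vdash G\,\overline{a}=\mathrm{N}_\ta\,\bigl(F\,(\mathrm{E}_\s\,\overline{a})\bigr)=\mathrm{N}_\ta\,(F\,A)=\mathrm{N}_\ta\,B=\overline{b} \notag
\end{align}
using, in order, $\beta$-conversion applied to the definition of $G$, the enumerator property $\T\vdash\mathrm{E}_\s\,(A\oslash A)=A$ established via Theorem~\ref{t:a}, the hypothesis that $F$ defines $f$, and the definition of $f_\mathcal{U}$.

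Finally, I would invoke the characterization cited in the introduction: the closed terms of type $\N\to\N$ in the language of $\T$ define exactly the $\lep$-recursive functions of non-negative integers. Since $G$ is such a term, and $\T$ is strongly normalizing so that $G\,\overline{a}$ has a unique numeral normal form for each $a$, the total function $g$ on non-negative integers defined by $G$ is $\lep$-recursive. By the previous paragraph $g(a)=f_\mathcal{U}(a)$ on every $a$ for which $f_\mathcal{U}$ is defined, so $g$ is the desired $\lep$-recursive encoding of $f_\mathcal{U}$.

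Essentially all of the difficulty has already been discharged: the construction of the enumerator in Theorem~\ref{t:a} ensures that valid codes can be decoded back to their pure normal forms inside $\T$, Theorem~\ref{t:reduce} gives us an encoder for the codomain, and the cited result ports everything back to the well-studied world of $\lep$-recursive functions on the integers. The only subtle point worth flagging is that $F$ need not itself be a pure term, so $F\,(\mathrm{E}_\s\,\overline{a})$ may have a non-pure normal form in general; but when $f_\mathcal{U}$ is defined at $a$, the defining hypothesis on $F$ supplies the provable equation $\T\vdash F\,A=B$ with $B$ pure and closed, which is exactly what is required for the chain of equalities above and for the applicability of $\mathrm{N}_\ta$.
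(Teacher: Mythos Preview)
Your proposal is correct and follows exactly the paper's own argument: both construct the term $\lambda x^\N.\,\mathrm{N}_\ta\,(F\,(\mathrm{E}_\s\,x))$ and appeal to the characterization of the $\T$-definable functions of type $\N\to\N$ as the $\lep$-recursive functions. You supply more detail than the paper (explicitly chaining the equalities and noting that $G$ yields a total extension of $f_\mathcal{U}$), but the approach is identical.
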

\begin{proof}
Consider any function $f$ from pure closed $\beta\eta$-normal forms of type $\s$ to pure closed $\beta\eta$-normal forms of type $\ta$, and suppose that $f$ is defined by a closed term $F^{\s\to\ta}$ in the language of $\T$.  Then $f_\mathcal{U}$ can be defined by the closed term
\begin{align}
\lambda x^\N.\,\mathrm{N}_\ta\,\bigl(F\,(\mathrm{E}_\s\,x)\bigr) \notag
\end{align}
of type $\N\to\N$.  But the closed terms of type $\N\to\N$ in the language of $\T$ define $\lep$-recursive functions of non-negative integers~\cite{gK59,wT65}.  Therefore, $f_\mathcal{U}$ is a $\lep$-recursive function of non-negative integers.
\end{proof}

\begin{theorem} \label{t:v}
Let $f$ be any function from pure closed $\beta\eta$-normal forms of type $\s$ to pure closed $\beta\eta$-normal forms of type $\ta$.  If $f_\mathcal{V}$ is  a $\lep$-recursive function of non-negative integers, then $f$ can be defined in G\"odel's theory $\T$.
\end{theorem}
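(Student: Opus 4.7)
The plan is to mirror the proof of Theorem~\ref{t:u}, composing three stages: first encode the input pure closed normal form as a numeral, then apply a closed term of type $\N\to\N$ that defines the $\lep$-recursive function $f_\mathcal{V}$, then decode the resulting numeral back to a pure closed normal form.  The $\mathcal{V}$-encoding is set up with exactly the right orientation for this: inputs are encoded via $\mathrm{N}_\s$, which we can apply \emph{forward} in $\T$, while outputs are encoded via $\oslash$, which we can \emph{invert} in $\T$ by means of the enumerator $\mathrm{E}_\ta$.

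First, I would invoke the Kreisel--Tait characterization cited in the introduction: since $f_\mathcal{V}$ is $\lep$-recursive, there exists a closed term $F^{\N\to\N}$ in the language of $\T$ with $\T\vdash F\,\overline{a}=\overline{b}$ whenever $f_\mathcal{V}\,a=b$.  I would then propose the candidate
\begin{align}
G^{\s\to\ta}=\lambda x^\s.\,\mathrm{E}_\ta\,\bigl(F\,(\mathrm{N}_\s\,x)\bigr) \notag
\end{align}
and verify that $G$ defines $f$.  For any pure closed $\beta\eta$-normal form $A^\s$ with $f\,A=B$, let $a$ and $b$ be the non-negative integers given by $\T\vdash\overline{a}=\mathrm{N}_\s\,A$ and $\T\vdash\overline{b}=B\oslash B$.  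Then $f_\mathcal{V}\,a=b$, hence $\T\vdash F\,\overline{a}=\overline{b}$, and therefore
\begin{align}
\T\vdash G\,A=\mathrm{E}_\ta\,\bigl(F\,(\mathrm{N}_\s\,A)\bigr)=\mathrm{E}_\ta\,\overline{b}=\mathrm{E}_\ta\,(B\oslash B)=B \notag
\end{align}
where the last equality is the defining property of the enumerator noted immediately after Theorem~\ref{t:a}.

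I do not expect any real obstacle: all the heavy lifting has been done in the earlier sections, in the construction of the enumerator $\mathrm{E}_\ta$, the reducibility term $\mathrm{N}_\s$, and in the Kreisel--Tait characterization of the $\lep$-recursive functions.  The only point requiring care is the matching of encoding directions, and this is precisely why two distinct encodings $\mathcal{U}$ and $\mathcal{V}$ are introduced: Theorem~\ref{t:u} needs to pair a decodable input (via $\oslash$ and $\mathrm{E}_\s$) with an encodable output (via $\mathrm{N}_\ta$), whereas Theorem~\ref{t:v} needs the opposite pairing, so that again both ends of the computation can be carried out by closed terms in $\T$.
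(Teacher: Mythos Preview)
Your proposal is correct and follows exactly the paper's own argument: represent $f_\mathcal{V}$ by a closed term of type $\N\to\N$ in $\T$ via the Kreisel characterization, and then define $f$ by $\lambda x^\s.\,\mathrm{E}_\ta\bigl(F\,(\mathrm{N}_\s\,x)\bigr)$. Your verification is in fact more explicit than the paper's, which simply states the defining term and notes that the result ``immediately follows.''
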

\begin{proof}
Consider any function $f$ from pure closed $\beta\eta$-normal forms of type $\s$ to pure closed $\beta\eta$-normal forms of type $\ta$, and suppose that $f_\mathcal{V}$ is a $\lep$-recursive function of non-negative integers.  Then~\cite{gK59b}, $f_\mathcal{V}$ can be defined by a closed term $G^{\N\to\N}$ in the language of $\T$.  It immediately follows that $f$ is defined by the term
\begin{align}
\lambda x^\s.\,\mathrm{E}_\ta\,\bigl(G\,(\mathrm{N}_\s\,x)\bigr) \notag
\end{align}
of type $\s\to\ta$.
\end{proof}

Analogs of Theorem~\ref{t:u} and Theorem~\ref{t:v} also hold for extensions of $\T$, such as Spector's theory for bar recursion~\cite{cS62}.  For example, let $f$ be a function from pure closed $\beta\eta$-normal forms of type $\s$ to pure closed $\beta\eta$-normal forms of type $\ta$.  If $f$ can be defined in an extension of $\T$, then $f_\mathcal{U}$ can be defined in that extension of $\T$.  Similarly, if $f_\mathcal{V}$ can be defined in an extension of $\T$, then $f$ can be defined in that extension of $\T$.

\section{Acknowledgments}

A draft of this paper appeared as a chapter in our Ph.D. thesis~\cite{mS10}.  We wish to thank our thesis advisor, Richard Statman, for suggesting the idea of the paper and for his subsequent comments and advice.  We also benefited from comments made by the members of our thesis committee, most notably Jeremy Avigad.

\bibliographystyle{amsplain}
\bibliography{FunctionalsInGodelsT}

\end{document}